\setlist[itemize]{noitemsep, topsep=0pt}
\theoremstyle{definition}
\newtheorem{theorem}{Theorem}[section]
\newtheorem{corollary}[theorem]{Corollary}
\newtheorem{lemma}[theorem]{Lemma}
\newtheorem{definition}[theorem]{Definition}
\newtheorem{remark}[theorem]{Remark}
\numberwithin{equation}{section}
\numberwithin{theorem}{section}
\definecolor{darkgreen}{rgb}{0,0.5,0}
\definecolor{darkblue}{rgb}{0,0.1,0.5}
\newcommand{\Id}{\mathrm{Id}}
\newcommand{\Hom}{\mathrm{Hom}}
\newcommand{\mcC}{\mathcal{C}}
\newcommand{\mcD}{\mathcal{D}}
\newcommand{\mcF}{\mathcal{F}}
\newcommand{\mcG}{\mathcal{G}}
\newcommand{\mcI}{\mathcal{I}}
\newcommand{\mcJ}{\mathcal{J}}
\DeclareMathOperator{\Rep}{Rep}
\newcommand{\Rad}{\mathrm{Rad}}
\newcommand{\Soc}{\mathrm{Soc}}
\newcommand{\rad}{\mathrm{rad}}
\newcommand{\soc}{\mathrm{soc}}
\title{Preservation of Loewy Diagrams Under Exact Functors}
\author{Matthew Rupert}
\date{}
\begin{document}

\maketitle

\abstract{We derive sufficient conditions for exact functors on locally finite abelian categories to preserve Loewy diagrams of objects. We apply our results to determine sufficient conditions for induction functors associated to simple current extensions of vertex algebras to preserve Loewy diagrams.}

\setlength\parindent{0pt}
\setlength{\parskip}{\baselineskip}%

\section{Introduction}

The Loewy diagram of an object is a diagram whose layers consist of the semisimple quotients of a Loewy series annotated with arrows indicating the existence of a non-split subquotient. Such diagrams are useful tools for determining the structure of abstractly defined objects such as projective covers. They have been used to investigate the structure of modules over many different algebras including, but not limited to, vertex algebras \cite{CCG}, Temperley-Lieb algebras \cite{BRS}, and quantum groups \cite{CRR}. For a Loewy diagram with Loewy length two, it is sometimes possible to show directly that an exact functor preserves the Loewy diagram as in \cite[Figure 1]{CKL}. These techniques fail, however, for Loewy diagrams of length greater than two, such as those which appear in \cite{CRR}. It is therefore desirable to have a general criterion for when an objects Loewy diagram is preserved under the action of an exact functor. 

The authors primary motivation for studying the preservation of Loewy diagrams under exact functors stems from the theory of vertex algebra extensions. Many interesting examples of vertex algebra extensions can be realized as simple current extensions, that is, they decompose as a direct sum of invertible modules (or simple currents). For example, the triplet $W_Q(p)$ is a simple current extension of the singlet $W_Q^0(p)$, where $p \in \mathbb{Z}_{\geq 2}$ and $Q$ is the root lattice of a simply laced simple finite dimensional complex Lie algebra, and the $B_Q(p)$ vertex operator algebras are simple current extensions of $W^0_Q(p) \otimes \mathsf{H}$ where $\mathsf{H}$ is the Heisenberg vertex algebra. One method for lifting information from a vertex algebra to its simple current extension is through induction functors. In particular, consider the following powerful result:
\begin{theorem}\cite{CKL,CKM,HKL}\\
Let $\mathcal{C}$ be a category of modules over some vertex operator algebra $V$ with a natural vertex tensor category structure. Then the following are equivalent:
\begin{itemize}
\item A vertex operator algebra extension $V \subset V^{e}$ with $V^e \in \mathcal{C}$ as a $V$-module.
\item A commutative associative algebra in $\mathcal{C}$ with trivial twist and injective unit.
\end{itemize}
Further, the category of $V^e$-modules which lie in $\mathcal{C}$ as $V$-modules is braided equivalent to $\mathrm{Rep}^0V^e$, now viewing $V^e \in \mathcal{C}$ as a commutative algebra object.
\end{theorem}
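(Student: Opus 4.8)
The plan is to run the standard dictionary, due to Huang--Lepowsky--Zhang, between intertwining operators among $V$-modules and morphisms in the vertex tensor category $\mathcal{C}$, and to show that under this dictionary a vertex operator algebra structure on $V^e$ extending $V$ is exactly the data of a commutative associative algebra object in $\mathcal{C}$ with trivial twist and injective unit; the argument is essentially that of Huang--Kirillov--Lepowsky, with the module-category refinements of Creutzig--Kanade--McRae and the non-semisimple input of Creutzig--Kanade--Linshaw. Throughout write $A$ for the object of $\mathcal{C}$ underlying $V^e$, $\mu\colon A\boxtimes A\to A$ for the multiplication, $\iota\colon \mathbf{1}=V\to A$ for the unit, and $c_{-,-}$, $\theta$ for the braiding and twist of $\mathcal{C}$. \textbf{Extension $\Rightarrow$ algebra.} Given a vertex operator algebra $V^e\supseteq V$ that lies in $\mathcal{C}$ as a $V$-module, restricting $Y_{V^e}$ to $V^e\otimes V^e$ produces, by the defining axioms of intertwining operators, an intertwining operator of type $\binom{V^e}{V^e\ V^e}$ of $V$-modules, hence by the universal property of $\boxtimes$ a morphism $\mu\colon A\boxtimes A\to A$. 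One then translates the vertex operator algebra axioms one at a time: the creation property $Y_{V^e}(\mathbf{1},x)=\Id$ becomes the unit axiom $\mu\circ(\iota\boxtimes\Id)=\Id$; locality of vertex operators (equivalently skew-symmetry together with convergence of products) becomes commutativity $\mu=\mu\circ c_{A,A}$; associativity of vertex operators becomes associativity of $\mu$ after inserting the associativity isomorphism of $\mathcal{C}$; and the $\ZZ$-grading of $V^e$ by $L(0)$ forces $\theta|_A=\Id_A$. Injectivity of $\iota$ holds because $V$ is by hypothesis a subalgebra of $V^e$.

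\textbf{Algebra $\Rightarrow$ extension.} Conversely, from $(A,\mu,\iota)$ with trivial twist and injective unit, the same correspondence promotes $\mu$ to an intertwining operator $\mathcal{Y}$ of type $\binom{A}{A\ A}$; one sets $Y_{V^e}:=\mathcal{Y}$ (recovering $Y_V$ via $\iota$) and checks the vertex operator algebra axioms. Trivial twist is precisely what guarantees that the expansions occurring in $\mathcal{Y}$ carry only integer powers of the formal variable, so that $Y_{V^e}(a,x)\in\End(A)[[x^{\pm1}]]$ is a bona fide vertex operator; commutativity of $\mu$ gives locality; associativity of $\mu$ (together with the explicit form of the associativity isomorphism) gives associativity of vertex operators, and hence the Jacobi identity by the usual formal-variable manipulations; injectivity of $\iota$ makes $V\into V^e$ an honest inclusion of vertex operator algebras.

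\textbf{The module equivalence.} For the final assertion, a $V^e$-module $M$ lying in $\mathcal{C}$ as a $V$-module yields, by restricting $Y_M$, an intertwining operator of type $\binom{M}{A\ M}$, hence an action $\mu_M\colon A\boxtimes M\to M$ satisfying the unit and associativity module axioms; the requirement that $Y_M(a,x)$ be a genuine, single-valued field translates into the monodromy condition $\mu_M\circ c_{M,A}\circ c_{A,M}=\mu_M$ defining $\Rep^0 A$. Running this in reverse builds a $V^e$-module from a local $A$-module, module maps correspond on the two sides, and one obtains an equivalence of categories; comparing the braiding of $\Rep^0 A$ (inherited from $\mathcal{C}$) with the one built from the monodromy of intertwining operators of $V^e$-modules shows the equivalence is braided.

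\textbf{Main obstacle.} The technical core is the equivalence ``associativity of the algebra object $\Longleftrightarrow$ associativity of vertex operators,'' which is exactly where the full analytic apparatus of the Huang--Lepowsky--Zhang tensor product theory enters: convergence of products and iterates of intertwining operators on suitable domains, their analytic continuation, and the explicit description of the associativity and braiding isomorphisms of $\mathcal{C}$ in terms of that continuation. One must further verify that the trivial-twist hypothesis is precisely what excludes the multivalued or logarithmic expansions that would otherwise prevent $\mathcal{Y}$ from being an ordinary vertex operator. Once this is established, and once the braiding (not merely the underlying categories) is checked to be preserved, the remaining axiom-by-axiom translations are comparatively routine bookkeeping.
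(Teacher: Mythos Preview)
The paper does not prove this theorem at all: it is stated in the Introduction as a result quoted from \cite{CKL,CKM,HKL}, with no accompanying argument. There is therefore nothing in the paper's own text to compare your proposal against.

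That said, your sketch is an accurate outline of the proof as carried out in those references, in particular Huang--Kirillov--Lepowsky \cite{HKL} for the algebra/extension dictionary and Creutzig--Kanade--McRae \cite{CKM} for the module-category equivalence. The identification of the key technical point (that associativity of the algebra object matches associativity of vertex operators via the analytic machinery of Huang--Lepowsky--Zhang, and that trivial twist eliminates multivaluedness) is correct. If you intend this as a standalone proof rather than a pointer to the literature, you should be aware that what you have written is a high-level roadmap: each of the ``routine bookkeeping'' translations you mention in fact occupies several pages in \cite{HKL,CKM}, and the braided-equivalence claim for $\Rep^0 A$ in particular requires a careful comparison of tensor structures that you have only gestured at.
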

Commutative algebra objects admit induction functors (see Equation \eqref{Eq:Ind}) which preserve projectivity so knowing when such a functor preserves Loewy diagrams provides a convenient criteria to lift information about the structure of projective modules over a vertex algebra to its simple current extensions. 

A particularly interesting example of a simple current extension is the simple affine VOA $L_{-\frac{3}{2}}(\mathfrak{sl}_3)$, which is a simple current extension of $W^0_{A_2}(2) \otimes \mathsf{H}$ and one of the most accessible examples beyond rank $1$. Conjectures for the structure of Loewy diagrams of projective covers for $L_{-\frac{3}{2}}(\mathfrak{sl}_3)$ were given in \cite[Figure 7]{CRR}. Implicit in the construction of these conjectures is (among other things) the assumption that the associated induction functor preserves Loewy diagrams. We investigate in Section \ref{Sec:Loewy} the general question of when an exact functor preserves Loewy diagrams and obtain the following result (Theorem \ref{Thm:Soc} and Corollary \ref{Cor:Loewy}):
\begin{theorem}
Let $\mcC$ be a locally finite abelian category and $\mcF:\mcC \to \mcD$ a strong exact functor (Definition \ref{Def:strong}) which preserves socle (or radical) filtrations (Definition \ref{Def:pres}), then $\mcF:\mcC \to \mcD$ preserves the associated Loewy diagrams.
\end{theorem}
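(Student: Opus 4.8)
The plan is to reduce the statement to a careful bookkeeping argument about how a "strong exact functor" interacts with the two filtrations (socle and radical) that define the Loewy diagram. Recall that the Loewy diagram of an object $M$ consists of two pieces of data: the list of semisimple layers $\mathrm{rad}^i M/\mathrm{rad}^{i+1} M$ (equivalently the socle layers, once one knows the Loewy length and that they agree appropriately), together with the arrows recording which pairs of composition factors in adjacent layers sit in a non-split extension inside $M$. So I would organize the proof into two goals: (i) $\mcF$ sends the socle/radical filtration of $M$ to the socle/radical filtration of $\mcF(M)$, including matching up the semisimple layers; and (ii) $\mcF$ detects the non-split-ness data encoded by the arrows.

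For goal (i), the hypothesis that $\mcF$ preserves socle (or radical) filtrations (Definition \ref{Def:pres}) is presumably exactly the statement that $\mcF(\soc^i M) = \soc^i \mcF(M)$ (or the radical analogue), so the work here is to check that the induced maps on successive quotients $\mcF(\soc^{i}M/\soc^{i-1}M) \cong \soc^{i}\mcF(M)/\soc^{i-1}\mcF(M)$ are the natural ones coming from exactness, and that $\mcF$ carries the simple constituents of each layer to simple constituents of the corresponding layer. Since $\mcF$ is exact it commutes with taking subquotients, so applying $\mcF$ to the short exact sequences $0 \to \soc^{i-1}M \to \soc^i M \to \soc^i M/\soc^{i-1}M \to 0$ immediately gives the layer identification; the only subtlety is ensuring the layers of $\mcF(M)$ are semisimple, which again follows from the preservation hypothesis. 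One should also note the Loewy length is preserved, which is automatic once the filtrations correspond.

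For goal (ii) — which I expect to be the main obstacle — the key point is that an arrow from a simple $S$ in layer $i$ to a simple $T$ in layer $i+1$ in the Loewy diagram of $M$ records that the corresponding subquotient of $M$ (a length-two object, or the relevant two-dimensional $\Ext$ datum) is non-split. I would isolate the precise combinatorial definition of "arrow" being used — typically: there is an arrow $S \to T$ iff the pullback-pushout subquotient of $M$ that has socle $\supseteq T$ and top $\supseteq S$ fails to split — and then argue that applying the exact functor $\mcF$ to the relevant short exact sequence $0 \to T \to E \to S \to 0$ sitting inside $M$ produces $0 \to \mcF(T) \to \mcF(E) \to \mcF(S) \to 0$ inside $\mcF(M)$, and that this sequence is non-split. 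Here is exactly where "strong" exact (Definition \ref{Def:strong}) must be invoked: a general exact functor can split non-split extensions (e.g. it could kill the relevant $\Ext^1$ class), so the definition of \emph{strong} is presumably cooked up to guarantee that $\mcF$ is faithful enough — e.g. injective on morphism spaces, or reflecting split epimorphisms/monomorphisms, or full-and-faithful on the relevant subcategory — that $\mcF(E)$ splitting would force $E$ to split. Conversely one needs that $\mcF$ does not \emph{create} new arrows, i.e. if $\mcF(E)$ is non-split then $E$ was non-split; this direction is the easy one since $\mcF$ applied to a splitting gives a splitting.

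Assembling these: after establishing the layer-by-layer correspondence of simple constituents via exactness and the preservation-of-filtrations hypothesis, and the arrow-by-arrow correspondence via the strongness hypothesis applied to the two-step subquotients, one concludes that the labelled graph which is the Loewy diagram of $\mcF(M)$ is isomorphic (compatibly with the layer grading) to that of $M$. The radical case is dual and follows either by the same argument with tops in place of socles or by a standard duality remark. The one place I would be most careful is to confirm that the ambient definitions of Loewy diagram in the paper really do reduce the "arrow" data to split-vs-nonsplit of explicit subquotients of $M$ (rather than to $\Ext$-groups in the abstract), since it is that concrete reformulation that makes the exact-functor argument go through cleanly; if instead the arrows are defined via $\dim \Ext^1(S,T)$ or multiplicities, then one additionally needs $\mcF$ to preserve those multiplicities, which again should follow from preservation of the filtrations plus strongness.
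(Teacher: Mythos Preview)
Your proposal is correct and follows essentially the same route as the paper: the layer identification is immediate from exactness plus the preservation-of-filtrations hypothesis, and the arrow preservation comes from applying strong exactness to the length-two subquotient $0 \to Y' \to E \to Y \to 0$ sitting inside $M_k/M_{k-2}$, yielding a non-split $0 \to \mcF(Y') \to \mcF(E) \to \mcF(Y) \to 0$ inside $\mcF(M_k)/\mcF(M_{k-2}) \cong \soc_k(\mcF(X))/\soc_{k-2}(\mcF(X))$. Two small remarks: the paper's Definition~\ref{Def:strong} of ``preserves Loewy diagrams'' is one-directional (arrows in $X$ imply arrows in $\mcF(X)$), so your discussion of not \emph{creating} new arrows is more than is asked; and your final caveat is well-placed, since the paper's Definition~\ref{Def:Loewy} does define arrows precisely via non-split short exact sequences of subquotients of $M_k/M_{k-2}$, which is exactly what makes the strong-exactness argument go through.
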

In Section \ref{Sec:Ind} we apply this result to induction functors associated to commutative algebra objects which are direct sums over a collection of invertible objects $\{U_i\}_{i \in \mcI}$:
\[ A:=\bigoplus\limits_{i \in \mcI} U_i.\]
We refer to such algebra objects as commutative simple current algebra objects when the collection $\{U_i\}_{i \in \mcI}$ is closed under products and duals (see Subsection \ref{subsec:inv} for details). We say $A$ has no fixed points if the collection $\{U_i\}_{i \in \mcI}$ has no fixed points (see Definition \ref{Def:fp}). We obtain the following result (Theorem \ref{Thm:Ind}):
\begin{theorem}
Let $\mcC$ be a braided locally finite abelian category and $A \in \mcC$ a commutative simple current algebra object with no fixed points which is simple as a left $A$-module. Then the induction functor $\mcF_A:\mcC \to \Rep A$ preserves Loewy diagrams.
\end{theorem}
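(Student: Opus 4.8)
The plan is to verify that $\mcF_A$ satisfies the hypotheses of the main results of Section~\ref{Sec:Loewy}: I would check that $\Rep A$ is a locally finite abelian category, that $\mcF_A$ is a strong exact functor in the sense of Definition~\ref{Def:strong}, and that it preserves socle filtrations in the sense of Definition~\ref{Def:pres}; preservation of radical filtrations then follows by the dual argument, and Theorem~\ref{Thm:Soc} together with Corollary~\ref{Cor:Loewy} gives the conclusion. Throughout, write $G_A\colon \Rep A \to \mcC$ for the restriction functor, which is faithful and exact (kernels and cokernels in $\Rep A$ are computed in $\mcC$) and is right adjoint to $\mcF_A$. Writing $A = \bigoplus_{g\in G} A_g$ for the grading, with each $A_g$ invertible and $A_e \cong \mathds{1}$ (using that $\mathds{1}$ is a summand), we have $G_A\mcF_A(-) \cong A\otimes(-) \cong \bigoplus_{g\in G} A_g\otimes(-)$. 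Since each $A_g\otimes(-)$ is an autoequivalence of $\mcC$ it is exact; hence $G_A\mcF_A$ is exact, and because $G_A$ reflects exactness, $\mcF_A$ is exact. Moreover $X \cong A_e\otimes X$ is a direct summand of $G_A\mcF_A(X)$, so $\mcF_A$ is faithful and $\mcF_A(X)=0$ iff $X=0$; also $\Rep A$ is locally finite, as composition lengths and $\Hom$-spaces there are controlled by their images under $G_A$.

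The key lemma is that $\mcF_A$ sends simple objects to simple objects. For simple $S\in\mcC$ the object $G_A\mcF_A(S) = \bigoplus_{g} A_g\otimes S$ is semisimple, and by the fixed-point-free hypothesis it is multiplicity-free, since $A_g\otimes S \cong A_h\otimes S$ forces $A_{h^{-1}g}\otimes S \cong S$ and hence $g=h$. Thus any nonzero $A$-submodule $N\subseteq \mcF_A(S)$ restricts to $\bigoplus_{g\in T} A_g\otimes S$ for some nonempty $T\subseteq G$; but the $A$-action restricts the graded multiplication maps $A_h\otimes A_g \to A_{hg}$, which are isomorphisms (here the simple current structure of $A$, together with simplicity of $A$ as a left $A$-module, is used), carrying $A_g\otimes S$ onto $A_{hg}\otimes S$, so $T = G$ and $N = \mcF_A(S)$. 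Conversely, every simple $T\in\Rep A$ has this form: picking a simple subobject $S\hookrightarrow G_A(T)$ in $\mcC$, adjunction gives a nonzero, hence surjective, map $\mcF_A(S)\to T$, which is an isomorphism because $\mcF_A(S)$ is simple. In particular $G_A$ carries semisimple objects to semisimple objects, and $\mcF_A$ reflects simplicity (if $\mcF_A(X)$ is simple, then $G_A\mcF_A(X)$ is semisimple with $X$ as a summand, forcing $X$ simple). Applying $G_A$ and passing to the $A_e$-summand also shows that $\mcF_A$ does not split non-split short exact sequences; these are the observations I would use to verify Definition~\ref{Def:strong}.

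For preservation of socles I would show $\mcF_A(\soc X) = \soc(\mcF_A X)$. For the inclusion ``$\subseteq$'', $\mcF_A(\soc X)$ is a submodule of $\mcF_A(X)$ (by exactness) that is semisimple, being a direct sum of the simple modules $\mcF_A(S_i)$ where $\soc X = \bigoplus_i S_i$, so $\mcF_A(\soc X)\subseteq \soc(\mcF_A X)$. For ``$\supseteq$'', since $G_A$ preserves semisimplicity,
\[ G_A\bigl(\soc(\mcF_A X)\bigr) \subseteq \soc\bigl(G_A\mcF_A X\bigr) = \bigoplus_g \soc(A_g\otimes X) = \bigoplus_g A_g\otimes\soc X = G_A\mcF_A(\soc X), \]
and because $G_A$ is faithful and exact, an inclusion of subobjects that holds after applying $G_A$ holds already in $\Rep A$ (the composite $\soc(\mcF_A X)\to\mcF_A X\to \mcF_A X/\mcF_A(\soc X)$ vanishes after $G_A$, hence vanishes), so $\soc(\mcF_A X)\subseteq \mcF_A(\soc X)$. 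Preservation of the full socle filtration then follows by induction on $n$ using exactness of $\mcF_A$ and $\soc^{n+1}(M)/\soc^n(M) = \soc(M/\soc^n M)$: if $\mcF_A(\soc^n X) = \soc^n(\mcF_A X)$, then $\mcF_A(\soc^{n+1}X)/\soc^n(\mcF_A X) = \mcF_A(\soc(X/\soc^n X)) = \soc(\mcF_A X/\soc^n(\mcF_A X)) = \soc^{n+1}(\mcF_A X)/\soc^n(\mcF_A X)$. The radical filtration is treated dually, using that $\mcF_A(X)/\mcF_A(\rad X) \cong \mcF_A(X/\rad X)$ is semisimple for one inclusion and restriction via $G_A$ for the other.

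I expect the main obstacle to be the simplicity lemma of the second paragraph, and specifically the point that the graded multiplication maps $A_h\otimes A_g\to A_{hg}$ are isomorphisms: this is where the hypotheses ``commutative simple current algebra object which is simple as a left $A$-module'' must be used essentially, and without fixed-point-freeness $\mcF_A(S)$ can genuinely fail to be simple, breaking everything downstream. The remaining work is routine diagram-chasing with exact functors and socle/radical series together with matching the properties above against the precise requirements of Definition~\ref{Def:strong}.
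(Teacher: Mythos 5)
Your proposal is correct and reaches the same conclusion through the same overall framework (verify the hypotheses of Theorem~\ref{Thm:Soc} and Corollary~\ref{Cor:Loewy}, using Lemma~\ref{Lem:irred} and Lemma~\ref{Lem:strongexact} as inputs), but the heart of the argument --- showing $\mcF_A$ preserves socle and radical filtrations --- is carried out by a genuinely different route. The paper proves $\mcF_A(\mathrm{Socle}(X)) \cong \mathrm{Socle}(\mcF_A(X))$ by a dimension count: it first notes the easy embedding $\mcF_A(\mathrm{Socle}(X)) \hookrightarrow \mathrm{Socle}(\mcF_A(X))$ (exactness plus preservation of simples), then shows the two sides have the same number of simple factors via Frobenius reciprocity \eqref{Eq:Frob}, summing $\dim\Hom$ over a set of representatives $J$ of simple objects of $\Rep A$ and re-indexing using $J_{\mcI} = I(\mcC)$. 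Your argument instead works through the forgetful functor $G_A$: you establish that $G_A$ carries semisimple objects to semisimple objects (because every simple of $\Rep A$ is induced, and $G_A\mcF_A(S) = \bigoplus_g A_g\otimes S$ is semisimple), compute $\soc(G_A\mcF_A X)$ termwise using that each $A_g\otimes(-)$ is an autoequivalence, and then use faithfulness and exactness of $G_A$ to descend the containment of subobjects back to $\Rep A$. Both proofs are valid; yours is arguably more structural, avoids choosing a set of representatives $J$, and has the pleasant feature that the radical case is literally dual, whereas the paper's counting argument for the radical requires a separate (if ``analogous'') count of simple quotients rather than subobjects. The remaining pieces --- $\mcF_A$ preserves and reflects simplicity, and the strong exactness argument via projecting onto the $\mathds{1}$-summand of $A$ --- match the paper's Lemma~\ref{Lem:irred} and Lemma~\ref{Lem:strongexact} in substance, though you phrase the simplicity lemma in terms of the graded multiplication maps $A_h\otimes A_g\to A_{hg}$ being nonzero (hence isomorphisms) while the paper derives a contradiction by exhibiting a proper left ideal $A_{\mcJ}\subset A$; these are the same obstruction seen from two sides.
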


\subsection*{Acknowledgements}
The author is grateful to Thomas Creutzig for his helpful comments. The author is supported by the Pacific Institute for the Mathematical Sciences (PIMS) postdoctoral fellowship
program.

\section{Preliminaries}
Throughout we assume that $\mcC$ is an abelian category. Given a monomorphism $f:X \to Y$ in $\mcC$, we adopt the standard notation $Y/X := \mathrm{Coker}(f)$. Cokernels always exist in abelian categories and are preserved by exact functors. If $\mcF:\mcC \to \mcD$ is exact, then $\mcF(\mathrm{Coker}(f)) \cong \mathrm{Coker}(\mcF(f))$ which can be represented in quotient notation as $\mcF(Y/X) \cong \mcF(Y)/\mcF(X)$.

\subsection{Loewy Diagrams}

Recall that a series of an object $M \in \mcC$ is a strictly increasing family of subobjects ordered by inclusion:
\[ 0=M_0 \subset M_1 \subset \hdots \subset M_n=M \qquad \mathrm{or} \qquad 0=M_n \subset M_{n-1} \subset \hdots M_0=M.\]
The successive quotients of such a series are the quotients $M_1/M_0, M_2/M_1,...,M/M_{n-1}$ or $M_0/M_1,...,M_{n-1}/M_n$  and the integer $n$ is called the length of the series.
\begin{definition}
A Loewy series for $M \in \mcC$ is a series of minimal length amongst those series which have semisimple successive quotients.
\end{definition}
Given an object $M \in \mcC$, the socle of $M$, $\mathrm{Soc}(M)$ is the largest semisimple subobject of $M$. That is, the direct sum of all simple subobjects of $M$. The radical of $M$, $\mathrm{Rad}(M)$, is the intersection of all maximal subobjects of $M$. There are two standard examples of Loewy series called the \textit{socle} and \textit{radical} series of $M \in \mcC$. The \textit{radical} series of $M \in \mcC$ 
\[ 0=\rad_n(M) \subset \rad_{n-1}(M) \subset \hdots \subset \rad_1(M) \subset \rad_0(M)=M\]
is defined inductively with $\rad_0(M)=M$ and $\rad_k(M)=\mathrm{Rad}(\rad_{k-1}(M))$. The \textit{socle} series of $M \in \mcC$
\[ 0=\soc_0(M) \subset \soc_1(M) \subset \hdots \subset \soc_{n-1}(M) \subset \soc_n(M)=M\]
 is defined inductively by $\soc_1(M)=\Soc(M)$ and letting $\soc_k(M)$ be the unique subobject of $M$ such that 
\begin{equation}\label{eq:Soc}\Soc(M/\soc_{k-1}(M)) \cong \soc_k(M)/\soc_{k-1}(M).\end{equation}

\begin{definition}\label{Def:Loewy}
 Given a fixed Loewy series $ 0=M_0 \subset M_1 \subset \hdots \subset M_n=M$ of $M \in \mcC$, the associated Loewy diagram is a diagram consisting of horizonatal layers where the $k$-th layer from the bottom consists of the simple factors in $M_k/M_{k-1}$. Loewy diagrams are annotated with vertical arrows from an irreducible factor $X_k$ in layer $k$ to $X_{k-1}$ in layer $k-1$ if there exists a subquotient X of $M_k/M_{k-2}$ such there exists a non-split short exact sequence
\[0 \to X_{k-1} \to X \to X_k \to 0.\]
\end{definition}

\begin{remark}\label{Rem}
It is implicit in the definition of the socle filtration that the subobjects $\mathrm{soc}_k(M)$ of $M$ are uniquely embedded in $M$ in the sense that if $f,g:\soc_k(M) \to M$ are any two monomorphisms, then $\mathrm{Im}(f)=\mathrm{Im}(g)$ and therefore $\mathrm{Coker}(f) \cong \mathrm{Coker}(g)$. This is obviously true for $\soc_1(M)=\Soc(M)$ and can be seen inductively for $k>1$ by observing that if $\mathrm{Im}(f) \not = \mathrm{Im}(g)$ then there is a canonical morphism $\phi:\mathrm{Im}(f) \oplus \mathrm{Im}(g) \to M$ whose image $\mathrm{Im}(\phi)$ is a subobject of $M$, contains $M_{k-1}$ as a subobject, and whose (unique) quotient by $M_{k-1}$ is semisimple, contradicting Equation \eqref{eq:Soc}.
\end{remark}

\subsection{Algebra Objects}
We now provide the necessary background on commutative algebra objects in braided categories. For a more detailed treatment, see \cite{CKM,KO}. In what follows, let $\mcC$ be a braided category with braiding $c_{-,-}$, associativity isomorphism $a_{-,-,-}$, and left unit isomorphism $\ell_{-}$.
\begin{definition}
An associative algebra in $\mcC$ is an object $A \in \mcC$ equipped with maps $\mu:A \otimes A \to A$ and $\iota:\mathds{1} \to A$ such that
\begin{itemize}
\item $\mu \circ (\mu \otimes \mathrm{Id}_A) \circ a_{A,A,A}=\mu \circ (\mathrm{Id}_A \otimes \mu)$. \hfill Associativity
\item $\mu \circ (\iota \otimes \mathrm{Id}_A)\circ \ell_A^{-1}=\mathrm{Id}_A$. \hfill Unit
\end{itemize}
Such an algebra is called commutative if $\mu \circ c_{A,A} = \mu$.
\end{definition}
We define the category $\mathrm{Rep}A$ as objects $(M,\mu_M)$ where $M \in \mcC$ and $\mu_M:A \otimes M \to M$ is a morphism in $\mcC$ such that
\begin{itemize}
\item $\mu_M \circ (\mu \otimes \mathrm{Id}_M) \circ a_{A,A,M}=\mu_M \circ (\mathrm{Id}_A \otimes \mu_M)$ \hfill Associativity
\item $\mu_M \circ (\iota \otimes \mathrm{Id}_A) \circ \ell_M^{-1}=\mathrm{Id}_M$ \hfill Unit
\end{itemize}
A morphism $f:(M,\mu_M) \to (N,\mu_N)$ in $\mathrm{Rep}A$ is a $\mcC$-morphism $f: M \to N$ such that $f \circ \mu_M=\mu_N \circ (\mathrm{Id}_A \otimes f)$, that is, $f$ intertwines the action of $A$ on $M$ and $N$. There exists a functor $\mcF_A:\mcC \to \mathrm{Rep}A$ given by 
\begin{equation}\label{Eq:Ind} \mcF_A(M)= (A \otimes M,(\mu \otimes \mathrm{Id}_M) \circ a_{A,A,M}) , \qquad \mcF_A(f)=\mathrm{Id}_A \otimes f\end{equation}
called the induction functor. If $\mcG:\mathrm{Rep}A \to \mcC$, $(M,\mu_M) \mapsto M$ is the forgetful functor, then there exists a natural isomorphism \cite[Lemma 7.8.12]{EGNO}
\begin{equation}\label{Eq:Frob}
\mathrm{Hom}_{\mathrm{Rep}A}(\mcF_A(M),N) \cong \mathrm{Hom}_{\mcC}(M,\mcG(N))
\end{equation}
which is often referred to as Frobenius reciprocity.

\subsection{Invertible Objects}\label{subsec:inv}
In Section \ref{Sec:Ind} we will derive sufficient conditions for induction functors of commutative algebra objects which are direct sums of invertible objects to preserve Loewy diagrams. We collect here the properties of invertible objects which we will require. Given a monoidal category $\mcC$ with unit $\mathds{1}$, an object $U \in \mcC$ is called invertible if it is rigid (sometimes referred to as dualizable) such that the (co)-evaluation morphisms
\[ \mathrm{ev}_{U}:U^* \otimes U \to \mathds{1}, \qquad \mathrm{coev}_U:\mathds{1} \to U \otimes U^*,\]
are isomorphisms. The subcategory of invertible objects is closed under duals and tensor products \cite[Proposition 2.11.3]{EGNO}. Given a family of invertible objects $\{U_i\}_{i \in \mcI}$, we say the family is closed if it is closed under tensor products and inverses (duals). That is, if for all $i,j \in \mcI$, $U_i \otimes U_j \cong U_k$ for some $k \in \mcI$ and $U_{-i}\cong U_s$ for some $s \in \mcI$ where we now denote the inverse of $U_i$ by $U_{-i}:=U_i^*$.
\begin{definition}\label{Def:fp}
A fixed point of an invertible object $U \in \mcC$ is an object $N \in \mcC$ such that $U \otimes N \cong N$. We say a family of invertible objects $\{U_i\}_{i \in \mcI}$ has no fixed points if no member of the family other than the monoidal unit has a fixed point.
\end{definition}
We collect in the following lemma some well known properties of invertible objects which we will require.
%Notice that if $U_i \otimes N \cong U_j \otimes N$ for some $i,j \in \mathcal{I}$, then if $U_k \cong U_{-i} \otimes U_j$ we have $U_k \otimes N \cong N$, so $N$ is a fixed point. Hence, we have the following lemma. If $M,U \in \mcC$ with $U$ invertible and $M$ simple, then if $U \otimes M$ is not simple, there is a subobject $X \in \mcC$ with monomorphism $\iota: X \hookrightarrow U \otimes M$. If the tensor product in $\mcC$ is left exact, then there is a monomorphism $U^{-1} \otimes X \hookrightarrow M$, a contradiction. We therefore have the following lemma.
\begin{lemma}\label{Lem:inv}
Let $\mcC$ be a monoidal category. Then we have the following:
\begin{itemize}
\item[\textbf{(1)}] If $\{U_i\}_{i \in \mathcal{I}}$ is a closed family of invertible objects in $\mcC$ with no fixed points, then $U_i \otimes N \not \cong U_j \otimes N$ for all $i,j \in \mathcal{I}$ and $N \in \mathcal{C}$.
\item[\textbf{(2)}] If $\mcC$ is abelian and $M,U \in \mcC$ with $M$ simple and $U$ invertible, then $U \otimes M$ and $M \otimes U$ are simple.
\end{itemize}
\end{lemma}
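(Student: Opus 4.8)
The plan is to deduce both parts from the single observation that, for an invertible object $U$, the endofunctor $U \otimes -$ (and likewise $- \otimes U$) is an auto-equivalence of $\mcC$, with quasi-inverse $U^* \otimes -$ (respectively $- \otimes U^*$). Indeed, the defining isomorphisms $\mathrm{ev}_U : U^* \otimes U \to \mathds{1}$ and $\mathrm{coev}_U : \mathds{1} \to U \otimes U^*$, combined with the associativity and unit constraints, furnish natural isomorphisms $U^* \otimes (U \otimes -) \cong \mathrm{Id}_{\mcC}$ and $U \otimes (U^* \otimes -) \cong \mathrm{Id}_{\mcC}$ (and symmetrically on the right); by Mac Lane coherence the necessary compatibilities are automatic, so I would not dwell on them.

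For part \textbf{(2)}: an auto-equivalence of an abelian category is exact and faithful and induces a bijection on subobject lattices, hence carries simple objects to simple objects. Applying the equivalence $U \otimes -$ (respectively $- \otimes U$) to the simple object $M$ therefore produces the simple objects $U \otimes M$ (respectively $M \otimes U$).

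For part \textbf{(1)}: I would prove the contrapositive, namely that $U_i \otimes N \cong U_j \otimes N$ forces $U_i \cong U_j$; the asserted non-isomorphism then holds whenever $U_i \not\cong U_j$, in particular for distinct indices of a family indexed so that distinct indices give non-isomorphic objects. Given such an isomorphism, apply the equivalence $U_{-j} \otimes -$ to both sides. Using associativity and $U_{-j} \otimes U_j = U_j^* \otimes U_j \cong \mathds{1}$, the right-hand side becomes $(U_{-j} \otimes U_j) \otimes N \cong N$. Since the family is closed, $U_{-j} \otimes U_i \cong U_k$ for some $k \in \mcI$, so the left-hand side becomes $(U_{-j} \otimes U_i) \otimes N \cong U_k \otimes N$. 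Thus $U_k \otimes N \cong N$, i.e. $N$ is a fixed point of the family member $U_k$; by the no-fixed-points hypothesis $U_k \cong \mathds{1}$, that is $U_{-j} \otimes U_i \cong \mathds{1}$. Tensoring this with $U_j$ and using $U_j \otimes U_j^* \cong \mathds{1}$ yields $U_i \cong U_j$, as required.

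Both arguments are purely formal, so there is no real obstacle. The only points requiring a little care are (i) the bookkeeping of associator and unitor isomorphisms in the displayed chains, which coherence renders harmless, and (ii) fixing the correct reading of \textbf{(1)}: the clean statement to establish is "$U_i \otimes N \cong U_j \otimes N \Rightarrow U_i \cong U_j$", which is exactly what the computation above delivers.
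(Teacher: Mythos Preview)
The paper does not actually supply a proof of this lemma; it is stated as a collection of ``well known properties'' and left unproven. So there is nothing to compare against, and the question is simply whether your argument is sound. It is: both parts follow immediately from the observation that $U\otimes -$ is an auto-equivalence with quasi-inverse $U^*\otimes -$, and your contrapositive computation for \textbf{(1)} is the standard one. Your caveat about the reading of \textbf{(1)} is apt --- the literal ``for all $i,j$'' is false at $i=j$, and what you prove (and what the paper actually uses in Lemma~\ref{Lem:irred}) is the statement for $U_i\not\cong U_j$.

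One small point worth making explicit: when you invoke closure to write $U_{-j}\otimes U_i\cong U_k$ you are assuming the family is closed under inverses, not merely under products. The paper's definition of ``closed'' only states closure under products, but its notation $U_{-i}:=U_i^*$ (treating $-i$ as an element of $\mcI$) and its later use of $U_{-i}$ in the proof of Theorem~\ref{Thm:Ind} make clear that this is the intended meaning, so your reading is the correct one.
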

The proof of $(1)$ is a straight-forward proof by contradiction. To see that $(2)$ holds, first observe that if the functors $(U \otimes -)$ and $(- \otimes U)$ for any invertible object $U \in \mcC$ are exact then the statement follows easily. Indeed, if $U \otimes M$ is not simple it admits some proper subobject $N$ satisfying an exact sequence
\[ 0 \to N \to U \otimes M. \] 
Then exactness of $U^* \otimes -$ yields an exact sequence
\[ 0 \to U^* \otimes N \to U^* \otimes U \otimes M \cong M.\]
Simplicity of $M$ now gives $U^* \otimes N \cong M$ so $N \cong U \otimes M$ by invertibility of $U$. Now we need only check that the functors given by tensoring with invertible objects are always exact. First, recall from \cite[Proposition 2.11.3]{EGNO} that the left and right duals of invertible objects are isomorphic (so invertible objects have both left and right duals). It then follows from \cite[Proposition 2.10.8]{EGNO} that for any invertible object $U \in \mcC$, the functors $(U \otimes -)$ and $(- \otimes U)$ admit left and right adjoints given by tensoring with the corresponding duals. Any functor between abelian categories with left and right adjoints is exact so the claim follows.

\section{Preservation of Loewy Diagrams}\label{Sec:Loewy}
We consider now exact functors $\mcF:\mcC \to \mcD$ where $\mcC$ is assumed to be locally finite i.e. abelian and $\mathds{k}$-linear such that spaces of morphisms are finite dimensional and every object has finite length. To determine when such a functor preserves the Loewy diagram associated to the socle or radical filtration of $X \in \mcC$, we must first determine when the functor preserves socle or radical filtrations in some appropriate sense. To make this precise, we introduce the following definition.
\begin{definition}\label{Def:pres}
We say that a functor $\mathcal{F}:\mcC \to \mcD$ preserves the socle filtration of $X \in \mcC$ if the Loewy lengths of $X$ and $\mcF(X)$ coincide and
\[ \mcF(\soc_k(X)) \cong \soc_k(\mcF(X))\]
for $k=1,...,n$ with $n$ the Loewy length of $X$.  Similarly, we say that $\mcF:\mcC \to \mcD$ preserves radical filtrations if for any $X \in \mcC$, the Loewy lengths of $X$ and $\mcF(X)$ coincide and
\[ \mcF(\rad_k(X)) \cong \rad_k(\mcF(X))\]
for $k=1,...,n$. We say $\mcF:\mcC \to \mcD$ preserves socle (resp. radical) filtrations if it preserves the socle (resp. radical) filtration of every object in $\mcC$.
\end{definition}
It is clear that if $\mcF:\mcC \to \mcD$ preserves the socle filtration of $X \in \mcC$ then for each $k=1,...,n$
\[ \soc_k(\mcF(X))/\soc_{k-1}(\mcF(X)) \quad  \mathrm{and} \quad \mcF(\soc_k(X)/\soc_{k-1}(X)) \cong \mcF(\soc_k(X))/\mcF(\soc_{k-1}(X))\]
are semisimple with the same simple factors so they are isomorphic and the analogous statement holds for radical filtrations as well. Hence, the functor preserves the layers of the associated Loewy diagram, but perhaps not the arrows. 
\begin{definition}\label{Def:strong}
Denote by $0=M_0 \subset M_1 \subset \hdots \subset M_n=X$ a fixed socle or radical filtration of $X \in \mcC$ and let $X_k$ be a simple factor of $M_k/M_{k-1}$. Given a functor $\mcF:\mcC \to \mcD$ which preserves the socle or radical filtration, we say it preserves the associated Loewy diagram if whenever there exists an arrow (recall Definition \ref{Def:Loewy}) from $X_k$ in layer $k$ to $X_{k-1}$ in layer $k-1$ in the Loewy diagram of $X$, there exists an arrow from $\mcF(X_k)$ in layer $k$ to $\mcF(X_{k-1})$ in layer $k-1$ of the Loewy diagram of $\mcF(X)$.
\end{definition}
Hence, a functor preserves Loewy diagrams precisely when it preserves both the layers and the arrows of the diagrams. We can characterize functors which preserve socle and radical filtrations by how they interact with the socle and radical operations.
\begin{theorem}\label{Thm:Soc}
Let $\mcC$ be locally finite and $\mcF:\mcC \to \mcD$ an exact functor. Then,
\begin{itemize}
\item $\mcF:\mcC \to \mcD$ preserves socle filtrations iff $\mcF(\mathrm{Socle}(X)) \cong \mathrm{Socle}(\mcF(X))$ for all $X \in \mcC$
\item $\mcF:\mcC \to \mcD$ preserves radical filtrations iff $\mcF(\mathrm{Rad}(X)) \cong \mathrm{Rad}(\mcF(X))$ for all $X \in \mcC$
\end{itemize}
\end{theorem}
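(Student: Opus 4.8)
The plan is to prove both bullets by induction on the Loewy length, treating the socle case explicitly (the radical case being dual, obtained by passing to $\mcC^{\mathrm{op}}$ or simply by running the same argument with $\Rad$ in place of $\Soc$ and reversing inclusions). The forward direction is immediate: if $\mcF$ preserves the socle filtration of every $X$, then in particular it preserves $\soc_1(X) = \Soc(X)$, so $\mcF(\Soc(X)) \cong \Soc(\mcF(X))$. The substance is the converse.

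So assume $\mcF(\Soc(X)) \cong \Soc(\mcF(X))$ for all $X \in \mcC$. First I would check the Loewy-length claim. Since $\mcF$ is exact, applying it to the short exact sequence $0 \to \soc_{k-1}(X) \to \soc_k(X) \to \soc_k(X)/\soc_{k-1}(X) \to 0$ and using local finiteness gives a handle on lengths; the key point is that $\mcF$ sends a nonzero object to a nonzero object when $\mcF(\Soc(-))$ is faithful on simples, which follows because every nonzero object has nonzero socle and $\mcF(\Soc(X)) \cong \Soc(\mcF(X))$ forces $\mcF(X) \neq 0$ whenever $X \neq 0$. Then I would argue that $\mcF(X)$ has Loewy length exactly $n$ once we establish the filtration isomorphisms up to layer $n$ together with the fact that $\soc_{n-1}(X) \subsetneq X$ maps to a proper subobject.

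The core induction: suppose $\mcF(\soc_{k-1}(X)) \cong \soc_{k-1}(\mcF(X))$ as subobjects of $\mcF(X)$ (the base case $k=1$ is the hypothesis, and here I would invoke Remark \ref{Rem} to know these subobjects are canonically determined, so the isomorphism can be taken compatibly with the embeddings). Applying the exact functor $\mcF$ to the defining relation $\Soc(X/\soc_{k-1}(X)) \cong \soc_k(X)/\soc_{k-1}(X)$ and using $\mcF(Y/Z) \cong \mcF(Y)/\mcF(Z)$ from the Preliminaries, we get
\[
\mcF(\soc_k(X))/\mcF(\soc_{k-1}(X)) \;\cong\; \mcF\big(\Soc(X/\soc_{k-1}(X))\big) \;\cong\; \Soc\big(\mcF(X/\soc_{k-1}(X))\big) \;\cong\; \Soc\big(\mcF(X)/\soc_{k-1}(\mcF(X))\big),
\]
where the middle isomorphism is the hypothesis applied to the object $X/\soc_{k-1}(X)$, and the last uses the inductive identification $\mcF(\soc_{k-1}(X)) \cong \soc_{k-1}(\mcF(X))$. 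By the defining property \eqref{eq:Soc} of the socle filtration of $\mcF(X)$, this last object is exactly $\soc_k(\mcF(X))/\soc_{k-1}(\mcF(X))$. So $\mcF(\soc_k(X))$ and $\soc_k(\mcF(X))$ are both subobjects of $\mcF(X)$ containing $\soc_{k-1}(\mcF(X))$ with isomorphic (semisimple) quotients; by the uniqueness in Remark \ref{Rem} applied inside $\mcF(X)$ — i.e., the socle-filtration subobjects are the unique ones with semisimple quotient over the previous stage — we conclude $\mcF(\soc_k(X)) \cong \soc_k(\mcF(X))$ as subobjects, completing the induction.

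The main obstacle I anticipate is the bookkeeping around \emph{uniqueness of embeddings}: \eqref{eq:Soc} characterizes $\soc_k(M)$ only up to the ambiguity discussed in Remark \ref{Rem}, so I must be careful that the isomorphism $\mcF(\soc_k(X)) \cong \soc_k(\mcF(X))$ is realized \emph{inside} $\mcF(X)$ and compatibly with the inclusions, not merely abstractly — otherwise the induction does not propagate. Resolving this cleanly amounts to observing that the subobject $\mcF(\soc_k(X)) \subseteq \mcF(X)$ (a genuine subobject, since $\mcF$ is exact and hence preserves monomorphisms) has semisimple quotient over $\soc_{k-1}(\mcF(X)) = \mcF(\soc_{k-1}(X))$ — this is the computation above — and that it is the \emph{largest} such, which again follows by transporting the largest-semisimple-subobject property of $\Soc$ through the hypothesis $\mcF(\Soc(-)) \cong \Soc(\mcF(-))$ applied to $X/\soc_{k-1}(X)$. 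Once this "largest" clause is nailed down, uniqueness of $\soc_k(\mcF(X))$ does the rest. The final step is to confirm equality of Loewy lengths: the above shows $\mcF(\soc_k(X))/\mcF(\soc_{k-1}(X)) \neq 0$ for $k \le n$ (it has the same simple composition factors as a nonzero semisimple object) and $\mcF(\soc_n(X)) = \mcF(X)$, so the socle filtration of $\mcF(X)$ has length exactly $n$.
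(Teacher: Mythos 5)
Your proof is correct and follows essentially the same route as the paper for the socle case: the chain of isomorphisms you write,
\[
\mcF(\soc_k(X))/\mcF(\soc_{k-1}(X)) \cong \mcF\bigl(\Soc(X/\soc_{k-1}(X))\bigr) \cong \Soc\bigl(\mcF(X/\soc_{k-1}(X))\bigr) \cong \Soc\bigl(\mcF(X)/\soc_{k-1}(\mcF(X))\bigr),
\]
is the same as the paper's Equation \eqref{eq:Sociso}, and you invoke Remark \ref{Rem} in the same place and for the same reason, namely to know that the embedding ambiguities wash out so the inductive identification of subobjects propagates. The main difference is your treatment of the radical case. You wave at duality or a ``same argument reversed,'' whereas the paper proves the radical direction first and directly, and it is in fact strictly easier than the socle case: because $\rad_k(M) = \Rad(\rad_{k-1}(M))$ is literally an iterated operation, the induction step is simply $N_k = \Rad(N_{k-1}) \cong \Rad(\mcF(M_{k-1})) \cong \mcF(\Rad(M_{k-1})) = \mcF(M_k)$, with no need for a quotient description, Equation \eqref{eq:Soc}, or Remark \ref{Rem}. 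The socle filtration by contrast is not $\Soc(\soc_{k-1}(M))$, which is exactly why the subobject bookkeeping (which you anticipate and resolve correctly) is required there.

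One cautionary note: your aside that ``$\mcF(\Soc(X)) \cong \Soc(\mcF(X))$ forces $\mcF(X) \neq 0$ whenever $X \neq 0$'' is not a valid inference from the stated hypotheses --- the zero functor satisfies $\mcF(\Soc(X)) \cong \Soc(\mcF(X))$ for all $X$ but clearly does not preserve Loewy lengths. When $X$ is simple the hypothesis only says $\mcF(X)$ is semisimple, not nonzero. This is a gap in the theorem as stated rather than something particular to your write-up (the paper's proof does not address it either), and it is harmless for the paper's intended application, since an induction functor $\mcF_A$ with $\mathds{1}$ a summand of $A$ is automatically faithful. But be aware that faithfulness, or at least that $\mcF$ reflects the zero object, is genuinely an additional hypothesis you are using in the Loewy-length step, not something you get for free.
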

\begin{proof}
We first prove the statement for radical filtrations. It is clear that if $\mcF:\mcC \to \mcD$ preserves radical filtrations, then $\mcF(\mathrm{Rad}(X)) \cong \mathrm{Rad}(\mcF(X))$ for all $X \in \mcC$. Suppose then that $\mcF(\mathrm{Rad}(X)) \cong \mathrm{Rad}(\mcF(X))$ for all $X \in \mcC$ and let
\[ 0=M_t \subset M_{t-1} \subset \hdots M_1 \subset M_0=X, \quad \mathrm{and} \quad 0 = N_s \subset N_{s-1}\subset \hdots \subset N_1 \subset N_0=\mcF(X) \]
denote the radical filtrations of $X$ and $\mcF(X)$ respectively. We need to show that $t=s$ (recall that $t$ is finite since $\mcC$ is locally finite) and 
\[ \mcF(M_k) \cong N_k\]
for $k=0,...,t$. Notice that $N_1=\Rad(\mcF(X)) \cong \mcF(\Rad(X)) = \mcF(M_1)$ and if $N_{k-1} \cong \mcF(M_{k-1})$, then we have
\[ N_k=\Rad(N_{k-1}) \cong \Rad(\mcF(M_{k-1})) \cong \mcF(\Rad(M_{k-1})) = \mcF(M_k).\]
Hence, $N_k \cong \mcF(M_k)$ for $k=1,...,t$ by induction and $s=t$ since $N_t \cong \mcF(M_t) =0$.

We now consider the statement for socle filtrations. It is clear by definition that any functor preserving socle filtrations satisfies $\mcF(\mathrm{Socle}(X)) \cong \mathrm{Socle}(\mcF(X))$ for all $X \in \mcC$. Let 
\[ 0 = M_0 \subset M_1 \subset \cdots M_t=X \quad \mathrm{and} \quad 0 = N_0 \subset N_1 \subset \cdots N_s=\mcF(X)\]
denote the socle filtrations of $X$ and $\mcF(X)$ respectively, that is, $M_k = \soc_k(X)$ and $N_k =\soc_k(\mcF(X))$. We need to show that $s=t$ (note again that $t$ is finite since $\mcC$ is locally finite) and $\mcF(M_k) \cong N_k$ for $k=0,...,t$. We prove this claim by induction with the $k=1$ case being the assumption $\mcF(\mathrm{Socle}(X)) \cong \mathrm{Socle}(\mcF(X))$. First observe that we have the following equivalence:
\begin{align}
\nonumber \mathrm{Soc}(\mcF(X)/\mcF(M_{k-1})) & \cong \Soc(\mcF(X/M_{k-1}))\\
\label{eq:Sociso}& \cong \mcF(\Soc(X/M_{k-1}))\\
\nonumber& \cong \mcF(M_k/M_{k-1})\\
\nonumber& \cong \mcF(M_k)/\mcF(M_{k-1}).
\end{align}
Since $N_{k-1} \cong \mcF(M_{k-1})$, $N_{k-1}$ is a subobject of $\mcF(M_k)$ and by Remark \ref{Rem} we have
\[ \mcF(X)/\mcF(M_{k-1}) \cong \mcF(X)/N_{k-1} \qquad \mathrm{and} \qquad \mcF(M_k)/N_{k-1} \cong \mcF(M_k)/\mcF(M_{k-1}).\]
It therefore follows from Equation \eqref{eq:Sociso} that we have $\Soc(\mcF(X)/N_{k-1}) \cong \mcF(M_k)/N_{k-1}$ so it follows from Equation \eqref{eq:Soc} that $N_{k} \cong \mcF(M_{k})$ if $N_{k-1} \cong \mcF(M_{k-1})$ so the induction step holds. Since $N_t \cong \mcF(M_t) \cong \mcF(X)$ we have $t=s$.
\end{proof}
We must now determine when an exact functor preserves arrows in the Loewy diagram of $X \in \mcC$. To this end we introduce the following definition. 
\begin{definition}
We call an exact functor $\mcF:\mcC \to \mcD$ strong exact if any short exact sequence $0 \to U \to V \to W \to 0$ in $\mcC$ splits iff the short exact sequence $0 \to \mcF(U) \to \mcF(V) \to \mcF(W) \to 0$ in $\mcD$ induced by $\mcF$ splits.
\end{definition}

\begin{corollary}\label{Cor:Loewy}
Let $\mcC$ be locally finite and $\mcF: \mcC \to \mcD$ an exact functor which preserves socle or radical filtrations. Then $\mcF:\mcC \to \mcD$ preserves the associated Loewy diagrams if it is strong exact.
\end{corollary}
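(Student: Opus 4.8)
The plan is to reduce everything to the preservation of arrows. Recall that a functor preserves a Loewy diagram exactly when it preserves both the layers and the arrows, and that preservation of the layers is automatic once $\mcF$ preserves socle (resp.\ radical) filtrations, as observed after Definition~\ref{Def:pres}. So I fix the chosen socle (or radical) filtration $0 = M_0 \subset M_1 \subset \cdots \subset M_n = X$, set $N_k := \mcF(M_k)$, and note that by the hypothesis, together with Remark~\ref{Rem} to identify the relevant subobjects of $\mcF(X)$ canonically, the chain $N_0 \subset N_1 \subset \cdots \subset N_n = \mcF(X)$ is the socle (resp.\ radical) filtration of $\mcF(X)$, with $N_k/N_{k-1} \cong \mcF(M_k/M_{k-1})$ the $k$-th layer. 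Everything then hinges on transporting a single arrow.

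Suppose there is an arrow from a simple factor $X_k$ of $M_k/M_{k-1}$ to a simple factor $X_{k-1}$ of $M_{k-1}/M_{k-2}$; by Definition~\ref{Def:Loewy} this means there is a subquotient $Y$ of $M_k/M_{k-2}$ and a non-split short exact sequence $0 \to X_{k-1} \to Y \to X_k \to 0$ in $\mcC$. I would write $Y \cong B/C$ with $M_{k-2} \subseteq C \subseteq B \subseteq M_k$, apply the exact functor $\mcF$, and use $\mcF(M_{k-2}) = N_{k-2}$ and $\mcF(M_k) = N_k$ (as subobjects of $\mcF(X)$) to present $\mcF(Y) \cong \mcF(B)/\mcF(C)$ as a subquotient of $N_k/N_{k-2}$; exactness of $\mcF$ simultaneously gives the short exact sequence $0 \to \mcF(X_{k-1}) \to \mcF(Y) \to \mcF(X_k) \to 0$ in $\mcD$. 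Strong exactness of $\mcF$ then guarantees that this sequence is non-split, since the original one was.

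The final step is to descend from $\mcF(X_k)$ and $\mcF(X_{k-1})$, which are semisimple but in general not simple, to actual simple constituents of the layers of $\mcF(X)$. I would decompose $\mcF(X_{k-1}) = \bigoplus_i S_i$ and $\mcF(X_k) = \bigoplus_j T_j$ into simples; the extension class of the non-split sequence lives in $\Ext^1_{\mcD}(\mcF(X_k),\mcF(X_{k-1})) \cong \bigoplus_{i,j}\Ext^1_{\mcD}(T_j,S_i)$, so some component is nonzero for a pair $(i,j)$. Pushing the sequence out along the projection $\mcF(X_{k-1}) \onto S_i$ and pulling it back along the inclusion $T_j \into \mcF(X_k)$ yields a non-split short exact sequence $0 \to S_i \to Y' \to T_j \to 0$ whose middle term $Y'$ is a subobject of a quotient of $\mcF(Y)$, hence a subquotient of $N_k/N_{k-2}$. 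Since $S_i$ lies in layer $k-1$ and $T_j$ in layer $k$ of the Loewy diagram of $\mcF(X)$, this is precisely an arrow from $\mcF(X_k)$ to $\mcF(X_{k-1})$, and the proof is complete.

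I expect the one genuinely delicate point to be this last paragraph: an exact functor need not preserve simplicity, so one cannot read an arrow off directly and must instead extract a non-split extension between individual simple summands via pushout and pullback. Everything else is routine: preservation of layers is already recorded, strong exactness is exactly what transfers non-splitness, and transitivity of ``being a subquotient'' in an abelian category handles the bookkeeping with $N_k/N_{k-2}$. One minor care point, handled just as in the proof of Theorem~\ref{Thm:Soc}, is to make sure the isomorphisms $\mcF(M_k) \cong \soc_k(\mcF(X))$ (resp.\ $\rad_k(\mcF(X))$) are realized compatibly as subobjects of $\mcF(X)$, so that the phrase ``subquotient of $N_k/N_{k-2}$'' is literally meaningful; Remark~\ref{Rem} gives this for socle filtrations, and for radical filtrations it is automatic since each $\rad_k$ is a canonical subobject.
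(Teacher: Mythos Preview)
Your argument follows the same route as the paper's: take the non-split subquotient witnessing an arrow, apply $\mcF$, use exactness to see that the image is a subquotient of $N_k/N_{k-2}$, and invoke strong exactness to keep the sequence non-split. The one substantive difference is your final paragraph: the paper simply asserts that $\mcF(Y)$ and $\mcF(Y')$ are simple ``since $\mcF$ preserves socle (or radical) filtrations,'' and stops there, whereas you observe that this does not follow from the stated hypotheses (preserving socle filtrations only forces $\mcF$ of a simple to be \emph{semisimple}) and add the $\Ext^1$/pushout--pullback step to extract a non-split extension between actual simple summands. So your proof is a genuine refinement of the paper's; in the paper's intended application this issue does not arise, since Lemma~\ref{Lem:irred} supplies preservation of simplicity separately.
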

\begin{proof}
Suppose that there exists an arrow from $Y$ in layer $k$ to $Y'$ in layer $k-1$ in the Loewy diagram of $X$. Recall from Definition \ref{Def:Loewy} that this implies that there exists a subquotient $M$ of $M_{k}/M_{k-2}$ such that a non-split short exact sequence 
\[ 0 \to Y' \to M \to Y \to 0\]
exists. By exactness, $\mcF(M)$ is a subquotient of 
\[ \mcF(\soc_k(M))/\mcF(\soc_{k-2}(M)) \cong \soc_k(\mcF(X))/\soc_{k-2}(\mcF(X)) \]
and since $\mcF$ is strong exact we have a non-split short exact sequence 
\[ 0 \to \mcF(Y') \to \mcF(M) \to \mcF(Y) \to 0\]
and $\mcF(Y),\mcF(Y')$ are simple since $\mcF:\mcC \to \mcD$ preserves socle (or radical) filtrations. Hence, there exists an arrow from $\mcF(Y)$ in layer $k$ to $\mcF(Y')$ in layer $k-1$ of the Loewy diagram of $\mcF(X)$. 
\end{proof}

\subsection{Induction Functors}\label{Sec:Ind}

The main result of this section will be sufficient conditions under which the induction functor (recall Equation \eqref{Eq:Ind}) $\mcF_A:\mcC \to \Rep A$ for some commutative algebra object $A \in \mcC$ which is a direct sum over a closed collection of invertible objects preserves Loewy diagrams. That is, we assume
\[A:=\bigoplus\limits_{i \in \mcI} U_i \]
where $\{U_i\}_{i \in \mcI}$ is a closed collection of invertible objects in $\mcC$ as in Subsection \ref{subsec:inv}. We will refer to such commutative algebra objects as commutative simple current algebra objects and we will say $A$ has no fixed points (or is fixed point free) when all $U_i, i\in \mcI$ have no fixed points. We will begin by establishing two necessary lemmas for our main result. First, we determine when the induction functor associated to a commutative simple current algebra object preserves simplicity of objects through a generalization of \cite[Proposition 3.4]{CR} and \cite[Propositions 4.4 \& 4.5]{CKM}:
\begin{lemma}\label{Lem:irred}
Let $\mcC$ be a braided category and $A= \bigoplus_{i \in \mcI} U_i\in \mcC$ a commutative simple current algebra object which is simple as a left $A$-module over itself and has no fixed points. Then we have the following:
\begin{itemize}
\item $\mcF_A:\mcC \to \Rep A$ preserves simplicity.
\item If every object in $\mcC$ contains a simple subobject then $N \in \Rep A$ is simple only if $N \cong \mcF_A(M)$ for some simple $M \in \mcC$.
\item Suppose $A$ contains $\mathds{1}$ as a summand and let $M,N \in \mcC$ be simple, then $\mcF_A(M) \cong \mcF_A(N)$ iff $M \cong U_i \otimes N$ for some $i \in \mcI$.
\end{itemize}
\end{lemma}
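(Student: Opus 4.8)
The plan is to prove the three bullet points of Lemma \ref{Lem:irred} in order, using Frobenius reciprocity \eqref{Eq:Frob}, the invertibility properties of Lemma \ref{Lem:inv}, and the key structural input that $A$ is simple as a left $A$-module. For the first bullet, I would take a simple $M \in \mcC$ and a nonzero subobject $0 \neq P \subseteq \mcF_A(M) = (A \otimes M, \mu \otimes \Id)$ in $\Rep A$. Applying the forgetful functor, $P$ is a $\mcC$-subobject of $A \otimes M = \bigoplus_{i \in \mcI} U_i \otimes M$; by Lemma \ref{Lem:inv}(2) each summand $U_i \otimes M$ is simple, and by Lemma \ref{Lem:inv}(1) (using no fixed points) the summands $U_i \otimes M$ are pairwise non-isomorphic, so $A \otimes M$ is a multiplicity-free semisimple object in $\mcC$ with known simple constituents. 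Hence $P$, as a $\mcC$-subobject, is a direct sum of a subset of the $U_i \otimes M$. The $A$-action then forces this subset to be all of $\mcI$: if $U_i \otimes M \subseteq P$, then $\mu$ restricted to $A \otimes (U_i \otimes M)$ lands in $P$, and because $A$ is simple over itself the map $A \otimes U_i \to A$, $a \mapsto \mu(a \otimes u_i)$ (for the ``component'' $U_i \hookrightarrow A$), is surjective — more carefully, the submodule of $A$ generated by the summand $U_i$ is all of $A$ — so $A \otimes M = (\text{submodule generated by } U_i \otimes M) \subseteq P$, giving $P = \mcF_A(M)$. This shows $\mcF_A(M)$ has no proper nonzero $\Rep A$-subobjects, i.e. is simple.

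For the second bullet, let $N \in \Rep A$ be simple. Since every object of $\mcC$ contains a simple subobject, pick a simple $M \subseteq \mcG(N)$ in $\mcC$; the inclusion $M \hookrightarrow \mcG(N)$ is nonzero, so by Frobenius reciprocity \eqref{Eq:Frob} it corresponds to a nonzero morphism $\mcF_A(M) \to N$ in $\Rep A$. By the first bullet $\mcF_A(M)$ is simple, and $N$ is simple, so any nonzero morphism between them is an isomorphism; hence $N \cong \mcF_A(M)$.

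For the third bullet, suppose $A$ contains $\mathds{1}$ as a summand, say $\mathds{1} = U_0$ with $0 \in \mcI$, and let $M, N \in \mcC$ be simple. If $M \cong U_i \otimes N$ for some $i$, then $\mcF_A(M) = A \otimes M \cong \bigoplus_{j} U_j \otimes U_i \otimes N \cong \bigoplus_{j} U_{j} \otimes N = A \otimes N = \mcF_A(N)$, where the reindexing $U_j \otimes U_i \cong U_{k}$ uses that the family is closed and the multiplication by $U_i$ permutes $\mcI$ (this is a bijection since $U_{-i} \otimes -$ is an inverse); one should check this reindexed direct sum is an isomorphism of $A$-modules, which follows from associativity of $\mu$. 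Conversely, suppose $\mcF_A(M) \cong \mcF_A(N)$ in $\Rep A$. Composing with the unit $M \cong \mathds{1} \otimes M = U_0 \otimes M \hookrightarrow A \otimes M = \mcF_A(M)$ — this is the image under Frobenius reciprocity \eqref{Eq:Frob} of $\Id_M$, hence a nonzero $\Rep A$-morphism $\mcF_A(M) \to \mcF_A(N)$ viewed after the iso, but better: using \eqref{Eq:Frob} directly, $\Hom_{\Rep A}(\mcF_A(M), \mcF_A(N)) \cong \Hom_{\mcC}(M, \mcG\mcF_A(N)) = \Hom_{\mcC}(M, A \otimes N) = \bigoplus_{i} \Hom_{\mcC}(M, U_i \otimes N)$. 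Since the left side is nonzero (it contains an isomorphism), some $\Hom_{\mcC}(M, U_i \otimes N) \neq 0$; as both $M$ and $U_i \otimes N$ are simple (the latter by Lemma \ref{Lem:inv}(2)), this forces $M \cong U_i \otimes N$.

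\textbf{Main obstacle.} The delicate point is the first bullet: extracting from ``$A$ is simple as a left $A$-module'' the statement that the $A$-submodule of $A \otimes M$ generated by any single summand $U_i \otimes M$ is the whole of $A \otimes M$. The clean way is to observe that $- \otimes M$ is functorial and that $A \otimes M = \mcF_A(M)$ in $\Rep A$ corresponds under the equivalence-like behavior of tensoring by an invertible-summand algebra to $A$ itself twisted by $M$; concretely, one wants that the lattice of $A$-submodules of $A \otimes M$ injects into (indeed matches) that of $A \otimes A$ localized appropriately, but the truly safe route is the direct computation above: decompose $A \otimes M$ as a multiplicity-free semisimple $\mcC$-object, note every $\Rep A$-subobject is a $\mcC$-subobject hence a sub-sum, and use that $\mu \colon A \otimes A \to A$ being surjective onto every summand $U_j$ (a consequence of left-simplicity of $A$, since the $A$-submodule generated by $U_i \subseteq A$ must be all of $A$) propagates, after tensoring with $M$ and using Lemma \ref{Lem:inv}(2) to preserve simplicity, to show the sub-sum is forced to be everything once it is nonempty. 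I would write out this propagation carefully since it is where the hypotheses genuinely combine; everything else is a formal consequence of Frobenius reciprocity and simplicity.
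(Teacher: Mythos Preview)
Your proposal is correct and essentially matches the paper's proof in all three bullets; the second and third bullets are nearly identical (the paper uses Frobenius reciprocity for the ``if'' direction of the third bullet rather than reindexing, but both work). One small streamlining in the paper's first bullet is worth noting, since it dissolves what you flag as the main obstacle: instead of arguing that the $A$-submodule of $A \otimes M$ generated by a single summand $U_i \otimes M$ is everything (which requires propagating a surjectivity statement through $- \otimes M$), the paper observes that if $P$ corresponds to the sub-sum over $\mcJ \subsetneq \mcI$, then closure of $P$ under $\mu \otimes \Id_M$ together with the pairwise non-isomorphism of the $U_k \otimes M$ forces the image of each $\mu|_{U_i \otimes U_j}$ to lie in $A_\mcJ := \bigoplus_{j \in \mcJ} U_j$, so $A_\mcJ$ is itself a proper $A$-submodule of $A$ --- an immediate contradiction with simplicity of $A$.
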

\begin{proof}
Let $M \in \mcC$ be simple and suppose  $\mcF_A(M) \in \Rep A$ admits a proper subobject $X \stackrel{\iota}{\hookrightarrow} \mcF_A(M)$. This induces a $\mcC$-morphism via the forgetful functor:
\[ \mcG(X) \stackrel{\mcG(\iota)}{\hookrightarrow} \mcG(\mcF_A(M)) \cong \bigoplus\limits_{i \in \mcI} U_i \otimes M \in \mcC.\]
It follows from Lemma \ref{Lem:inv} that $U_i \otimes M$ is simple for all $i \in \mcI$ so we must have
\[ \mcG(X) \cong \bigoplus\limits_{j \in  \mcJ} U_j \otimes M\]
for some non-empty subset $\mcJ \subset \mcI$. Since $X$ is a subobject of $\mcF_A(M)$ it must be closed under the action of $\mu_{\mcF_A(M)}=\mu \otimes \mathrm{Id}_M$. Since we assume $\{U_i\}_{i \in \mcI}$ is closed and $A$ has no fixed points, $U_i \otimes M \not \cong U_j \otimes M$ for all $i,j \in \mcI$ by Lemma \ref{Lem:inv} so it follows that for all $i \in \mcI,j \in \mcJ$, the image of the restriction $\mu|_{U_i \otimes U_j}$ lies in $A_{\mcJ}:=\bigoplus_{j \in \mcJ} U_j \subset A$. Therefore, $A_{\mcJ}$ is a proper subobject of $(A,\mu) \in \Rep A$, a contradiction.

To prove the second claim, let $N \in \Rep A$ be simple. By assumption, $\mathcal{G}(N) \in \mcC$ has a simple subobject $M$ so there is a non-zero morphism
\[ f:M \to \mcG(N).\]
By Frobenius reciprocity \eqref{Eq:Frob}, there is a non-zero morphism $g:\mcF(M) \to N$ which is an isomorphism since both $N$ and $\mcF(M)$ are simple. It only remins to prove the third claim. First, suppose that $\mcF_A(M)\cong \mcF_A(N)$, then
\[ \bigoplus_{i \in \mcI }U_i \otimes M \cong \bigoplus_{i \in \mcI} U_i \otimes N\]
as objects in $\mcC$. Since $M,N$ are simple, $U_j \otimes N$ is also simple for any $j \in \mcI$ by Lemma \ref{Lem:inv} and we can restrict this isomorphism to $M \subset \mcF_A(M)$ to see that $ M \cong U_j \otimes N$ for some $j \in \mcI$. Conversely, if $M \cong U_j \otimes N$ for some $j \in \mcI$, then by Frobenius reciprocity \eqref{Eq:Frob} we have
\begin{align*}
\dim \Hom_{\Rep A}(\mcF_A(M),\mcF_A(N))&= \dim \Hom_{\mcC}(M,\mcG(\mcF_A(N))) \\
&=\dim \Hom_{\mcC}\left(M,\bigoplus\limits_{i \in \mcI}U_i \otimes N \right) >0
\end{align*}
so $\mcF_A(M) \cong \mcF_A(N)$ since they are simple.
\end{proof}
In addition to the above to lemma, we need to determine when the induction functor is strong exact, which is given in the following lemma.
\begin{lemma}\label{Lem:strongexact}
Let $A\in \mcC$ be a commutative algebra object which contains the unit object $\mathds{1}$ of $\mcC$ as a summand. Then the induction functor $\mcF_{A}:\mcC \to \Rep A$ is strong exact.
\end{lemma}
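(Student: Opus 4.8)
The plan is to check the two implications of strong exactness separately. One direction is purely formal: the tensor product of $\mcC$ is exact, so $\mcF_A = A\otimes(-)$ is an exact functor (a sequence in $\Rep A$ is exact iff its image under the exact, faithful forgetful functor $\mcG:\Rep A\to\mcC$ is), and, being additive, $\mcF_A$ sends any split short exact sequence to a split one; hence if $0 \to U \to V \to W \to 0$ splits in $\mcC$, its image $0 \to \mcF_A(U) \to \mcF_A(V) \to \mcF_A(W) \to 0$ splits in $\Rep A$. The substance is the converse. Write $\iota : \mathds{1} \to A$ and $\mu : A\otimes A \to A$ for the unit and multiplication of $A$; I will use that ``$A$ contains $\mathds{1}$ as a summand'' is read as: the unit $\iota$ is a split monomorphism in $\mcC$, i.e.\ there is a $\mcC$-morphism $\pi : A \to \mathds{1}$ with $\pi\circ\iota = \Id_{\mathds{1}}$.

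So fix a short exact sequence $0 \to U \to V \xrightarrow{\,p\,} W \to 0$ in $\mcC$ and suppose $0 \to \mcF_A(U) \to \mcF_A(V) \xrightarrow{\,\Id_A\otimes p\,} \mcF_A(W) \to 0$ splits in $\Rep A$, say with section $\sigma : \mcF_A(W) \to \mcF_A(V)$; applying $\mcG$ this becomes the statement $(\Id_A\otimes p)\circ\sigma = \Id_{A\otimes W}$ in $\mcC$. I would then build a section of $p$ in $\mcC$ by restricting $\sigma$ along the unit and projecting back along $\pi$, namely
\[
 r \ :=\ \ell_V\circ(\pi\otimes\Id_V)\circ\sigma\circ(\iota\otimes\Id_W)\circ\ell_W^{-1}\ :\ W\longrightarrow V,
\]
where $\ell$ is the left unit isomorphism. (The inner composite $\sigma\circ(\iota\otimes\Id_W)\circ\ell_W^{-1}$ is exactly the $\mcC$-morphism $W\to\mcG\mcF_A(V)$ corresponding to $\sigma$ under Frobenius reciprocity \eqref{Eq:Frob}.)

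The remaining step is to compute $p\circ r$ and see it equals $\Id_W$. Using naturality of $\ell$, namely $p\circ\ell_V = \ell_W\circ(\Id_{\mathds{1}}\otimes p)$, and bifunctoriality of $\otimes$, namely $(\Id_{\mathds{1}}\otimes p)\circ(\pi\otimes\Id_V) = (\pi\otimes\Id_W)\circ(\Id_A\otimes p)$, one rewrites
\[
 p\circ r \ =\ \ell_W\circ(\pi\otimes\Id_W)\circ(\Id_A\otimes p)\circ\sigma\circ(\iota\otimes\Id_W)\circ\ell_W^{-1}.
\]
Now substitute $(\Id_A\otimes p)\circ\sigma = \Id_{A\otimes W}$, combine $(\pi\otimes\Id_W)\circ(\iota\otimes\Id_W) = (\pi\circ\iota)\otimes\Id_W$, and use $\pi\circ\iota = \Id_{\mathds{1}}$; this collapses the right-hand side to $\ell_W\circ\ell_W^{-1} = \Id_W$. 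Hence $r$ is a section of $p$, the original sequence splits in $\mcC$, and $\mcF_A$ is strong exact.

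I do not anticipate a genuine obstacle: the verification is coherence bookkeeping, and one could equally invoke Mac Lane's coherence theorem to work in a strictification of $\mcC$ and drop $\ell$ entirely. The only point deserving a sentence of care is the meaning of ``$\mathds{1}$ is a summand of $A$'' — what the argument actually consumes is a retraction of the \emph{unit morphism} $\iota$, not merely a splitting of $A$ as an object, and that is the intended reading. Note also that commutativity of $A$ plays no role in this lemma; only the splitting of the unit is used.
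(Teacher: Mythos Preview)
Your proof is correct and follows essentially the same route as the paper's: restrict the $\Rep A$-splitting along the inclusion $\mathds{1}\hookrightarrow A$, project back along $A\to\mathds{1}$, and use naturality of $\ell$ to obtain a $\mcC$-splitting. The only cosmetic differences are that the paper builds a retraction of the monomorphism $f$ rather than a section of $p$, and it uses an arbitrary direct-sum inclusion $\iota_{\mathds{1}}:\mathds{1}\to A$ rather than the algebra unit---so your closing caveat is in fact unnecessary, since neither argument invokes any algebra property of the chosen split mono $\mathds{1}\to A$.
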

\begin{proof}
Induction functors are always exact \cite[Theorem 1.6]{KO} so we only need to show that a short exact sequence
\begin{equation}\label{seq1}
0 \to X \overset{f}{\to} Y \overset{g}{\to} Z \to 0
\end{equation}
of objects in $\mcC$ splits if the induced short exact sequence
\begin{equation}\label{seq2}
0 \to \mcF_A(X) \overset{\mcF_A(f)}{\to} \mcF_A(Y) \overset{\mcF_A(g)}{\to} \mcF_A(Z) \to 0 
\end{equation}
splits. Suppose that sequence \eqref{seq2} splits, so there exists a map $h:\mcF_A(Y) \to \mcF_A(X)$ such that $h \circ \mcF_A(f)=\mathrm{Id}_{\mcF_A(X)}$. The induction functor acts by $X \to (A \otimes X, \mathrm{Id}_{A} \otimes f)$, so viewing sequence \eqref{seq2} as a sequence in $\mcC$, we have 
\[ h \circ (\mathrm{Id}_{A} \otimes f)=\mathrm{Id}_{A \otimes X}. \]
Since $A$ contains the unit object as a summand, we have $A= \mathds{1} \oplus A'$ for some object $A' \in \mcC$. Let
\[ \iota_{\mathds{1}}:\mathds{1} \to A, \quad \iota_{A'}:A' \to A, \qquad p_{\mathds{1}}:A \to \mathds{1}, \quad p_{A'}:A \to A'\]
be the associated inclusion and projection maps. Acting by $p_{\mathds{1}} \otimes \mathrm{Id}_X$ on the left, $\iota_{\mathds{1}} \otimes \mathrm{Id}_X$ on the right, and substituting $\mathrm{Id}_A=\iota_{\mathds{1}} \circ p_{\mathds{1}} + \iota_{A'} \circ p_{A'}$ we see that
\begin{align*}
\mathrm{Id}_{\mathds{1} \otimes X}&=p_{\mathds{1}} \otimes \mathrm{Id}_X \circ h \circ (\iota_{\mathds{1}} \circ p_{\mathds{1}}+\iota_{A'} \circ p_{A'}) \otimes f \circ \iota_{\mathds{1}} \otimes \mathrm{Id}_X\\
&=p_{\mathds{1}} \otimes \Id_X \circ h \circ \iota_{\mathds{1}} \otimes f\\
&= \tilde{h} \circ \Id_{\mathds{1}} \otimes f
\end{align*}
where $\tilde{h}=p_{\mathds{1}} \otimes \Id_X \circ h \circ \iota_{\mathds{1}} \otimes \Id_Y$. Composing this equality with the left unit isomorphism $\ell_X$ on the left, $\ell_X^{-1}$ on the right, and applying naturality gives
\begin{align*}
\mathrm{Id}_X&=\ell_X \circ \tilde{h} \circ (\mathrm{Id}_{\mathds{1}} \otimes f) \circ \ell_X^{-1}\\
&= (\ell_X \circ \tilde{h} \circ \ell_Y^{-1}) \circ f
\end{align*}
Therefore, the map $\ell_X \circ \tilde{h} \circ \ell_Y^{-1}:Y \to X$ provides a splitting of sequence \eqref{seq1}.
\end{proof}
 We can now apply these lemmata to prove the main theorem of this section. 
\begin{theorem}\label{Thm:Ind}
Let $\mcC$ be a locally finite braided category and $A\in \mcC$ a commutative simple current algebra object which is simple as a left $A$-module and has no fixed points. Then the induction functor $\mcF_A:\mcC \to \Rep A$ preserves Loewy diagrams associated to socle and radical filtrations.
\end{theorem}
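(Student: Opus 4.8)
The plan is to assemble the theorem directly from the three tools already in hand: Corollary~\ref{Cor:Loewy}, Lemma~\ref{Lem:irred}, and Lemma~\ref{Lem:strongexact}. By Corollary~\ref{Cor:Loewy}, it suffices to verify two things about $\mcF_A$: that it is strong exact, and that it preserves socle (resp. radical) filtrations. The first is immediate from Lemma~\ref{Lem:strongexact}, since by hypothesis $A$ contains $\mathds{1}$ as a summand. So the real content is the second point, and by Theorem~\ref{Thm:Soc} this in turn reduces to showing that $\mcF_A(\Soc(X)) \cong \Soc(\mcF_A(X))$ for all $X \in \mcC$ (and the dual statement for $\Rad$).

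For the socle statement I would argue as follows. Since $\mcF_A$ is exact, $\mcF_A(\Soc(X))$ is a semisimple subobject of $\mcF_A(X)$: indeed $\Soc(X) = \bigoplus_j S_j$ with $S_j$ simple, so $\mcF_A(\Soc(X)) = \bigoplus_j \mcF_A(S_j)$, and each $\mcF_A(S_j)$ is simple in $\Rep A$ by the first bullet of Lemma~\ref{Lem:irred}. Hence $\mcF_A(\Soc(X)) \subseteq \Soc(\mcF_A(X))$. For the reverse inclusion, let $T \hookrightarrow \mcF_A(X)$ be any simple subobject in $\Rep A$; the second bullet of Lemma~\ref{Lem:irred} (noting that $\mcC$ is locally finite, hence every nonzero object contains a simple subobject) gives $T \cong \mcF_A(M)$ for some simple $M \in \mcC$. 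Frobenius reciprocity \eqref{Eq:Frob} then yields $\Hom_\mcC(M,\mcG(\mcF_A(X))) \neq 0$, i.e. a nonzero map $M \to \bigoplus_{i} U_i \otimes X$; composing with a projection gives a nonzero map $M \to U_i \otimes X$ for some $i$, equivalently $U_{-i}\otimes M \to X$, which is injective since $U_{-i}\otimes M$ is simple (Lemma~\ref{Lem:inv}(2)). Thus $U_{-i}\otimes M$ embeds in $\Soc(X)$, so $M = U_i \otimes (U_{-i}\otimes M)$ (up to the invertibility isomorphisms) maps the simple $\mcF_A(M)$ into $\mcF_A(U_i \otimes U_{-i}\otimes M) \cong \mcF_A$ of a summand of $\Soc(X)$; chasing this shows $T = \mcF_A(M) \subseteq \mcF_A(\Soc(X))$. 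I should be a little careful here: what I actually need is that the sum of all simple subobjects of $\mcF_A(X)$ lands inside $\mcF_A(\Soc(X))$, and the argument above shows each such simple subobject individually does, which suffices. The radical statement is handled dually — or, more cleanly, one passes to the statement about $\mcF_A(X)/\mcF_A(\Rad(X))$ being the largest semisimple quotient, using exactness of $\mcF_A$ together with the fact that $\mcF_A$ sends the semisimple quotient $X/\Rad(X)$ to a semisimple object, and conversely any simple quotient of $\mcF_A(X)$ pulls back via Frobenius reciprocity to a simple quotient of $X$.

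Having verified both hypotheses of Corollary~\ref{Cor:Loewy}, the conclusion that $\mcF_A$ preserves the Loewy diagrams associated to socle and radical filtrations follows at once.

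I expect the main obstacle to be the reverse inclusion $\Soc(\mcF_A(X)) \subseteq \mcF_A(\Soc(X))$: the forgetful functor and Frobenius reciprocity let one produce a simple $M \in \mcC$ with $\mcF_A(M) \cong T$, but translating "$M$ is related to $X$ via a twist by some $U_i$" into an honest containment $T \subseteq \mcF_A(\Soc(X))$ requires keeping track of the invertibility isomorphisms $U_i \otimes U_{-i} \cong \mathds{1}$ and the fact that $\mcF_A$ is monoidal-compatible enough that $\mcF_A(U_i \otimes M') \cong \mcF_A(M')$ for a summand $U_i$ of $A$ (which is essentially the third bullet of Lemma~\ref{Lem:irred}, and is where the hypothesis $\mathds{1} \subseteq A$ is used). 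I would want to phrase this step so that it only invokes the already-proved bullets of Lemma~\ref{Lem:irred} rather than re-deriving the bookkeeping.
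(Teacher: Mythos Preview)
Your overall architecture matches the paper exactly: reduce via Theorem~\ref{Thm:Soc} and Corollary~\ref{Cor:Loewy}, invoke Lemma~\ref{Lem:strongexact} for strong exactness, and verify $\mcF_A(\Soc(X))\cong\Soc(\mcF_A(X))$ using Lemma~\ref{Lem:irred} and Frobenius reciprocity. The inclusion $\mcF_A(\Soc(X))\subseteq\Soc(\mcF_A(X))$ is fine. The reverse inclusion, however, has the gap you yourself anticipate in your final paragraph. Given a simple $T\hookrightarrow\mcF_A(X)$ with $T\cong\mcF_A(M)$, you pass to a nonzero $\psi:M\to\bigoplus_i U_i\otimes X$ and then \emph{project onto a single component} $M\to U_i\otimes X$. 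This yields $U_{-i}\otimes M\hookrightarrow\Soc(X)$ and hence an object isomorphic to $T$ sitting inside $\mcF_A(\Soc(X))$---but it does \emph{not} show that the original subobject $T\subseteq\mcF_A(X)$ lies in $\mcF_A(\Soc(X))$. Two isomorphic simple subobjects need not coincide, and discarding the other components of $\psi$ destroys exactly the data needed to recover the original embedding; invoking bullet three of Lemma~\ref{Lem:irred} only furnishes an abstract isomorphism $\mcF_A(U_{-i}\otimes M)\cong T$, not a containment of subobjects of $\mcF_A(X)$.

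The paper sidesteps this by a multiplicity count rather than a subobject-by-subobject argument: it shows
\[
\sum_{\mcF_A(S)\in I(\Rep A)}\dim\Hom_{\Rep A}(\mcF_A(S),\mcF_A(X))=\sum_{S\in I(\mcC)}\dim\Hom_{\mcC}(S,X)
\]
via Frobenius reciprocity and a re-indexing of the sum using the third bullet of Lemma~\ref{Lem:irred}; since one semisimple object is already contained in the other, equality of lengths forces equality. Your approach is salvageable if instead of projecting you observe that \emph{every} component of $\psi$ is either zero or an injection $M\hookrightarrow U_i\otimes\Soc(X)$, so that $\psi$ itself factors through $\bigoplus_i U_i\otimes\Soc(X)=\mcG(\mcF_A(\Soc(X)))$, and then the explicit form of the Frobenius adjunction shows the original embedding $T\hookrightarrow\mcF_A(X)$ factors through $\mcF_A(\Soc(X))$. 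Equivalently and more cleanly: $\mcG(T)\cong\bigoplus_i U_i\otimes M$ is semisimple in $\mcC$, hence lands in $\Soc(\mcG\mcF_A(X))=\bigoplus_i U_i\otimes\Soc(X)=\mcG(\mcF_A(\Soc(X)))$, and $\mcG$ reflects containment of subobjects.
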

\begin{proof}
$\mcF_{A}$ is exact and preserves simplicity by Lemma \ref{Lem:irred}, so for all $X \in \mcC$, there are embeddings
\[ \mcF_A(\mathrm{Socle}(X)) \hookrightarrow \mathrm{Socle}(\mcF_A(X)) \quad  \mathrm{and} \quad \mcF_A(\Rad(X)) \hookrightarrow \Rad(\mcF_A(X)). \]
To show this embedding is an isomorphism, it is enough to show that each of the above (semi-simple) modules contain the same number of simple factors. We prove this for the first embedding as the second is analogous. It is enough to prove the following equality:
\[ \sum\limits_{\mcF_A(S) \in I(\Rep A)} \mathrm{dim}\mathrm{Hom}_{\Rep_A}(\mcF_A(S),\mcF_A(X)) = \sum\limits_{S \in I(C)} \mathrm{dim}\mathrm{Hom}_{\mcC}(S,X) \]
where $I(\mcC)$ and $I(\Rep A)$ denote the collection of all isomorphism classes of simple objects in $\mcC$ and $\Rep A$ respectively. Let $J \subset I(\mcC)$ be any collection of isomorphism classes of simple objects in $\mcC$ such that for every simple object $\mcF_A(S) \in I(\Rep A)$ (recall from Lemma \ref{Lem:irred} that all simple objects in $\Rep A$ take this form), $J$ contains exactly one simple $S' \in \mcC$ such that $\mcF_A(S')\cong \mcF_A(S)$. In particular, we have $|J|=|I(\Rep A)|$ and $J_{\mcI}:=\{ U_{-i} \otimes S \, | \, i \in \mcI, \; S \in J\}=I(\mcC)$ since two simple objects $S$ and $S'$ induce to isomorphic objects iff $S'=U_i \otimes S$ (i.e. $U_{-i} \otimes S' \cong S$) for some $i \in \mcI$ by Lemma \ref{Lem:irred}. It now follows from Frobenius reciprocity \eqref{Eq:Frob} that
\begin{align*}
\sum\limits_{\mcF_A(S) \in I(\Rep A)} \dim \Hom_{\Rep A}(\mcF_A(S),\mcF_A(X))& =\sum\limits_{S \in J} \dim \Hom_{\mcC}(S, \mcG(\mcF_A(X)))\\
&=\sum\limits_{S \in J} \dim \Hom_{\mcC}(S,\bigoplus\limits_{i \in \mcI} U_i \otimes X)\\
&=\sum\limits_{S \in J, i \in \mcI} \dim \Hom_{\mcC}(S,U_i \otimes  X)\\
&= \sum\limits_{S \in J, i \in \mcI} \dim \Hom_{\mcC}(U_{-i} \otimes S,X)\\
&=\sum\limits_{S \in J_{\mcI}} \dim \Hom_{\mcC}(S,X)\\
&=\sum\limits_{S \in I(\mcC)} \dim \Hom_{\mcC}(S,X).
\end{align*}
where we have used the fact that each $U_i$ is rigid with dual $U_{-i}$. Therefore, $\mcF_A:\mcC \to \Rep A$ preserves socle filtrations by Theorem \ref{Thm:Soc}. By assumption, $\{U_i\}_{i \in \mcI}$ is closed, so $\mathds{1} \cong U_i^* \otimes U_i$ appears as a summand of $A$ and $\mcF_A$ preserves Loewy diagrams by Lemma \ref{Lem:strongexact}.
\end{proof}

\end{document}